\documentclass{louloupart1}
\usepackage[utf8]{inputenc}
\usepackage{mathtools}
\usepackage{graphicx}
\usepackage{tikz}
\usetikzlibrary{cd}

\title[Automorphisms of the Artin group of type $D_5$]{Automorphisms of the Artin group of type $D_5$}
\author[L Paris]{Luis Paris}
\givenname{Luis}
\surname{Paris}
\address{Luis Paris, Université Bourgogne Europe, CNRS, IMB, UMR 5584, 21000 Dijon, France}
\email{lparis@u-bourgogne.fr}
\author[I Soroko]{Ignat Soroko}
\givenname{Ignat}
\surname{Soroko}
\address{Ignat Soroko, Department of Mathematics, Southern Methodist University, Dallas, TX 75205, USA}
\email{isoroko@smu.edu}

\newtheorem{thm}{Theorem} 
\newtheorem{lem}[thm]{Lemma}
\newtheorem{prop}[thm]{Proposition}
\newtheorem{corl}[thm]{Corollary}

\theoremstyle{definition}

\newtheorem{rem}[thm]{Remark}

\newtheorem*{acknow}{Acknowledgments}

\def\N{\mathbb N} 
\def\Z{\mathbb Z}

\newcommand{\Ker}{\operatorname{Ker}}

\newcommand{\End}{\operatorname{End}}
\renewcommand{\Im}{\operatorname{Im}} 
\newcommand{\Aut}{\operatorname{Aut}}
\newcommand{\Inn}{\operatorname{Inn}}
\newcommand{\Out}{\operatorname{Out}}
\newcommand{\conj}{\operatorname{conj}}

\newcommand{\ov}{\overline}
\mathchardef\mhyphen="2D

\renewcommand{\le}{\leqslant}
\renewcommand{\ge}{\geqslant}

\begin{document}

\begin{abstract}
For the Artin group of type $D_5$, we determine its automorphism group and the automorphism group of its quotient by the center. This settles the only remaining case, $n=5$, in the classification of automorphisms of Artin groups of spherical type $D_n$.

\smallskip\noindent
{\bf AMS Subject Classification\ \ } 
Primary: 20F36, secondary: 20F28, 57K20.

\smallskip\noindent
{\bf Keywords\ \ } 
Artin groups of type $D_n$, automorphisms.

\end{abstract}

\maketitle


\section{Introduction}\label{sec1}

Let $S$ be a finite set.
A \emph{Coxeter matrix} over $S$ is a square matrix $M=(m_{s,t})_{s,t\in S}$ indexed by the elements of $S$, with coefficients in $\N \cup \{\infty\}$, such that $m_{s,s}=1$ for all $s \in S$, and $m_{s,t} = m_{t,s} \ge 2$ for all $s,t\in S$, $s\neq t$.
Such a matrix is usually represented by a labeled graph, $\Gamma$, called a \emph{Coxeter graph}, defined by the following data.
The set of vertices of $\Gamma$ is $S$.
Two vertices $s,t\in S$ are connected by an edge if $m_{s,t}\ge 3$, and this edge is labeled with $m_{s,t}$ if $m_{s,t} \ge 4$.

If $a,b$ are two letters and $m$ is an integer $\ge 2$, then we denote by $\Pi(a,b,m)$ the alternating word $aba\!\ldots$ of length $m$.
In other words, $\Pi(a,b,m) = (ab)^{\frac{m}{2}}$ if $m$ is even, and $\Pi(a,b,m) = (ab)^{\frac{m-1}{2}} a$ if $m$ is odd.
Let $\Gamma$ be a Coxeter graph and let $M=(m_{s,t})_{s,t\in S}$ be its Coxeter matrix.
With $\Gamma$ we associate a group, $A[\Gamma]$, called the \emph{Artin group} of $\Gamma$, defined by the presentation
\[
A[\Gamma] = \langle S \mid \Pi(s,t,m_{s,t}) = \Pi(t,s,m_{s,t}) \text{ for } s, t \in S\,,\ s \neq t\,,\ m_{s,t}\neq \infty \rangle\,.
\]
The \emph{Coxeter group} of $\Gamma$, denoted by $W[\Gamma]$, is the quotient of $A[\Gamma]$ by the relations $s^2 = 1$, $s \in S$. We say that $\Gamma$ is \emph{of spherical type} if $W[\Gamma]$ is finite.

The natural problem of determining automorphism groups of Artin groups has been solved only for certain classes: spherical types $A_n$~\cite{DyeGro1,ChaCri1}, $B_n$~\cite{ChaCri1}, $I_2(m)$~\cite{GiHoMeRa1,CriPar0}, $D_4$~\cite{Sorok1}, and $D_n$ for $n\ge 6$~\cite{CasPar1}; affine types $\widetilde A_n$ and $\widetilde C_n$~\cite{ChaCri1}; some $2$-dimensional Artin groups~\cite{Crisp1,AnCho1}; large-type free-of-infinity Artin groups~\cite{Vasko1}; some other classes of large-type Artin groups~\cite{BlMaVa1,HuOsVa1}; and right-angled Artin groups, for which a uniform description and finite presentation of automorphism groups is known~\cite{Serva1,Laure1,Day1}.

In this paper we fill in the missing case $n=5$ in the $D_n$ sequence, i.e.\ we classify automorphisms of the Artin group $A[D_5]$ with the Coxeter graph given in Figure~\ref{fig:d5a4} on the left.

\begin{figure}
\begin{center}
\begin{tikzpicture}[scale=0.7, very thick]
\begin{scope}[scale=1.33] 
\fill (0,0) circle (2.3pt) node [below=2pt,blue] {\small$t_1$}; 
\fill (1,0) circle (2.3pt) node [below=2pt,blue] {\small$t_2$}; 
\fill (2,0) circle (2.3pt) node [below=2pt,blue] {\small$t_3$}; 
\fill (3,0.5) circle (2.3pt) node [below=1pt,blue] {\small$t_4$}; 
\fill (3,-0.5) circle (2.3pt) node [below=1pt,blue] {\small$t_5$}; 
\draw (3,0.5)--(2,0)--(3,-0.5);
\draw (0,0)--(1,0)--(2,0);
\draw (-1,0) node {$D_5$:};	
\end{scope}

\begin{scope}[scale=1.33, xshift=6.5cm] 
\fill (0,0) circle (2.3pt) node [below=2pt,blue] {\small$s_1$}; 
\fill (1,0) circle (2.3pt) node [below=2pt,blue] {\small$s_2$}; 
\fill (2,0) circle (2.3pt) node [below=2pt,blue] {\small$s_3$}; 
\fill (3,0) circle (2.3pt) node [below=2pt,blue] {\small$s_4$};
\draw (0,0)--(1,0)--(2,0)--(3,0);
\draw (-1,0) node {$A_4$:};	
\end{scope}
\end{tikzpicture}
\caption{The Coxeter graphs of types $D_5$ and $A_4$.\label{fig:d5a4}}
\end{center}
\end{figure}

Let $t_1,\dots,t_5$ denote the standard generators of $A[D_5]$, and let $\chi$ be the inversion automorphism of $A[D_5]$, defined on the generators as $\chi(t_i)=t_i^{-1}$, for $i=1,\dots,5$. Let $Z(G)$ denote the center of a group $G$. We prove the following theorems.
\begin{thm}\label{thm:1}
Let $A=A[D_5]$ and $C_2=\Z/2\Z$. Then
\[
\Aut(A)=\Inn(A)\rtimes\langle\chi\rangle\simeq A/Z(A)\rtimes C_2.
\]
In particular, $\Out(A)\simeq C_2$.
\end{thm}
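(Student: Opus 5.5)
The plan is to use the standard strategy for the $D_n$ cases (as in the work on $D_n$, $n\ge 6$), adapted to $n=5$ via the quotient $A/Z(A)$. Since $D_5$ has odd rank, the Garside element $\Delta$ is central in $A$ and $Z(A)=\langle\Delta\rangle\simeq\Z$. Every automorphism of $A$ preserves $Z(A)$, so it induces an automorphism of $\bar A = A/Z(A)$. The first step is therefore to determine $\Aut(\bar A)$, and then to lift this description back to $A$.

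To understand $\bar A$, I would use the embedding $A[D_n]\hookrightarrow A[B_n]$ and the identification of $A[B_n]$ with a subgroup of $A[A_n]$. Modulo centers, this should realize $\bar A$ as a finite-index subgroup of a mapping class group of a sphere with marked points, or of a punctured-disk mapping class group. Concretely, I would try to show that $\bar A$ is isomorphic to a finite-index subgroup of $\mathcal{M}(S_{0,6})$ or of $A[A_4]/Z$; this is presumably why the graph $A_4$ is drawn in Figure~\ref{fig:d5a4}. I would then invoke rigidity results (Ivanov/Korkmaz type) for finite-index subgroups of such mapping class groups: every automorphism of the subgroup is the restriction of conjugation by an element of the extended mapping class group that normalizes it. This would give $\Aut(\bar A)\simeq N/\text{(finite)}$ for an explicit normalizer $N$. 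From $N$ one then computes that $\Out(\bar A)$ is generated by the images of $\chi$ and possibly of the graph automorphism $t_4\leftrightarrow t_5$.

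The point to check is that the graph automorphism swapping $t_4,t_5$ is inner. For odd $n$, conjugation by $\Delta$ induces the nontrivial diagram symmetry of $D_n$, but here $\Delta$ is central. So I would instead check that this symmetry is realized by conjugation by $\Delta_{D_4}$ or by an element of $A$ modulo the center. If it turns out not to be inner, the statement forces $\Out(\bar A)$ to be small in some other way, and I would verify directly which candidates lie in $\Inn$.

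To pass to $A$, every $\varphi\in\Aut(A)$ composed with a suitable inner automorphism and possibly $\chi$ induces the identity on $\bar A$. So $\varphi(t_i)=t_i\Delta^{k_i}$. The $t_i$ are all conjugate, since $D_5$ is simply laced and connected, and $\Delta$ is central, so all $k_i$ are equal to some $k$. Abelianizing ($A^{ab}=\Z$, with $\Delta\mapsto 20$) gives $t\mapsto(1+20k)t$, and bijectivity forces $1+20k=\pm1$, hence $k=0$. This yields $\Aut(A)=\Inn(A)\rtimes\langle\chi\rangle$. The product is semidirect because $\chi$ is not inner: it acts by $-1$ on $A^{ab}$. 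Finally, $\Inn(A)\simeq A/Z(A)$.

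The main obstacle is computing $\Aut(\bar A)$, i.e. identifying $\bar A$ with a finite-index subgroup of a mapping class group and controlling its normalizer. The $n=5$ case presumably fails the hypotheses used for $n\ge 6$, for instance because the relevant surface is too small, so a special argument or direct computation is needed there.
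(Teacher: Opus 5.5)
Your proposal has a genuine gap at the step you yourself call the main obstacle, and the route you sketch for it cannot work.

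First, a correction. For $D_n$ with $n$ odd the Garside element $\Delta_D$ is \emph{not} central: conjugation by $\Delta_D$ is the diagram automorphism $t_4\leftrightarrow t_5$, and $Z(A)=\langle\Delta_D^2\rangle$. It is for even $n$ that $\Delta_D$ is central. This settles your ``point to check'' at once, since the diagram symmetry is inner. Your final lifting paragraph also survives with $z=\Delta_D^2$ and $\ell(z)=40$, which gives $1+40k=\pm1$ and hence $k=0$.

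But everything hinges on first knowing $\Aut(\bar A)$, and the identification you propose is impossible for dimension reasons. Since $\mathrm{cd}\,A[D_5]=5$, $\bar A$ has virtual cohomological dimension $4$. By contrast, $\mathcal M(S_{0,6})$ and $A[A_4]/Z(A[A_4])$ have virtual cohomological dimension $3$, so $\bar A$ is not a finite-index subgroup of either. In the paper, $A[A_4]$ appears as a \emph{quotient} of $A[D_5]$, not as an overgroup. The available geometric model, the Perron--Vannier embedding, puts $A[D_5]$ into the mapping class group of a genus-$2$ surface with boundary, with infinite-index image. The rigidity input needed there, a genus-$2$ analogue of Castel's classification, is exactly what is missing for $n=5$ (see Remark~\ref{rem:endos}). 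Your order of deduction is also the reverse of the paper's: there, $\Aut(\bar A)$ (Theorem~\ref{thm:2}) is derived \emph{from} Theorem~\ref{thm:1}, with extra work such as Lemma~\ref{lem:p-even}.

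The missing idea is to bypass $\bar A$ and work with $F=\Ker\pi$, where $\pi\colon A[D_5]\to A[A_4]$ identifies $t_4$ and $t_5$. Here $F$ is free of rank $4$, $A[D_5]\simeq F\rtimes A[A_4]$, and $F$ is the normal closure of $t_5t_4^{-1}$. Castel--Paris's Theorem~2.1 classifies homomorphisms $A[D_5]\to A[A_4]$; it holds for $n\ge5$ because it rests on the Chen--Kordek--Margalit/Orevkov classification of endomorphisms of $A[A_4]$. It shows that for any endomorphism $\varphi$, the composite $\pi\circ\varphi$ falls into one of two cases:
\begin{itemize}
\item it is cyclic, and then it kills $t_5t_4^{-1}$ and hence $F$;
\item it is conjugate to $\gamma_p\circ\pi\circ\psi$ with $\gamma_p$ injective and $\psi\in\langle\chi\rangle$, and then its kernel is exactly $F$.
\end{itemize}
Either way $\varphi(F)\subseteq F$, so $F$ is characteristic. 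Crisp--Paris's Theorem~4.9, $\Aut(A[D_5],F)=\Inn(A[D_5])\rtimes\langle\chi\rangle$, then gives the theorem directly, with no mapping class group rigidity needed.
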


\begin{thm}\label{thm:2}
Let $A=A[D_5]$, $\ov A=A/Z(A)$, and $\ov\chi$ be the
automorphism induced by $\chi$. Then
\[
\Aut(\ov A)=\Inn(\ov A)\rtimes
\langle\ov\chi\rangle\simeq A/Z(A)\rtimes C_2.
\]
In particular, $\Out(\ov A)\simeq C_2$.
\end{thm}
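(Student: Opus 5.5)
Theorem~\ref{thm:2} will be derived from Theorem~\ref{thm:1} together with one structural fact about $\ov A$. The link is the natural homomorphism $\Phi\colon\Aut(A)\to\Aut(\ov A)$, which exists because $Z(A)$ is characteristic.

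The center $Z(A)$ is infinite cyclic, generated by the Garside element $\Delta$; for $D_5$ (odd rank) the center is generated by $\Delta^2$. Since $A$ has trivial center modulo $Z(A)$ (i.e.\ $Z(\ov A)=1$), we get $\Inn(\ov A)\simeq\ov A=A/Z(A)$, and $\Phi$ maps $\Inn(A)\simeq A/Z(A)$ isomorphically onto $\Inn(\ov A)$. Given Theorem~\ref{thm:1}, the whole problem is therefore to show that $\Phi$ is \emph{surjective}: every automorphism of $\ov A$ lifts to $A$. Then $\Aut(\ov A)=\Inn(\ov A)\rtimes\langle\ov\chi\rangle$.

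\textbf{Injectivity of $\Phi$.} $\ov\chi$ is not inner: it acts as $-1$ on the abelianization of $\ov A$, which is $\Z$ modulo the image of $\Delta^2$, i.e.\ $\Z/40\Z$ since $\Delta^2$ has length $2\cdot 20$. So $-1\neq 1$ there and $\ov\chi$ is nontrivial in $\Out(\ov A)$. Hence $\Phi$ is injective and $\Out(\ov A)\supseteq C_2$.

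\textbf{Lifting automorphisms.} Let $\psi\in\Aut(\ov A)$. Choose lifts $T_i\in A$ of $\psi(\ov t_i)$. The braid relations hold in $A$ only up to central elements $\Delta^{2k_{ij}}$. Two facts control these defects.
\begin{itemize}
\item Each relation is homogeneous of degree $0$ in the difference of its two sides.
\item The abelianization $A\to\Z$ restricts injectively to $Z(A)$.
\end{itemize}
I would first show that $\psi$ preserves, up to sign, the degree map $\ov A\to\Z/40$, and use this to fix lifts with controlled degrees. Comparing abelianizations of the two sides of each relation then forces $k_{ij}=0$. For commuting pairs the two sides have equal degree automatically; for braid relations of length $3$ the degrees also match. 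So the lifts satisfy all relations and define an endomorphism $\tilde\psi$ of $A$ inducing $\psi$.

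The hard step is showing $\tilde\psi$ is an automorphism. I would argue this with the standard trick of also lifting $\psi^{-1}$ to some $\tilde\phi$. Then $\tilde\phi\tilde\psi$ induces the identity on $\ov A$, so it is $t_i\mapsto t_i z_i$ with $z_i$ central. The braid relations force all $z_i$ to be equal, say $z_i=\Delta^{2m}$. This map sends $\Delta^2$ to $\Delta^{2(1+40m)}$, and it is an automorphism only if $m=0$. The first of these two steps is where the care lies. If instead this degree-counting step fails, say because $\psi$ does not preserve $\deg$ up to sign, I would fall back on a direct analysis of $\ov A$ rather than lifting through $A$. I would classify the images of the generators using periodic elements and centralizers of $\Delta$-roots in the quotient, and reproduce the Theorem~\ref{thm:1} argument there.
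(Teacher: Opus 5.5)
\textbf{There is a genuine gap: the step that turns your lift into an automorphism is circular, and the claim that would repair it is never proved.}

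Your overall reduction is sound. Given Theorem~\ref{thm:1}, $Z(\ov A)=1$ and $\ov\chi\notin\Inn(\ov A)$, it suffices to show that every $\psi\in\Aut(\ov A)$ has a lift that is an automorphism of $A$. Your abelianization argument on $\ov A_{ab}\cong\Z/40\Z$ is a nice elementary substitute for \cite[Lemma~7.1]{CasPar1}. Constructing an endomorphism lift is also fine, and needs less than you say. The $\ov t_i$ are pairwise conjugate, so the $\psi(\ov t_i)$ have the same image in $\Z/40\Z$. Hence you can choose lifts $T_i$ with equal exponent sums, and this kills every central defect $k_{ij}$.

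The problem is the next step. From $\tilde\phi\tilde\psi\colon t_i\mapsto t_iz^m$ you conclude $m=0$ because this map ``is an automorphism only if $m=0$''. But you do not know that $\tilde\phi\tilde\psi$ is an automorphism; that is what you are trying to prove. The real obstruction is the action of $\psi$ on $\ov A_{ab}$. Suppose $\psi$ acts there as multiplication by $u\in(\Z/40\Z)^\times$. Then every lift $\tilde\psi$ satisfies $\ell\circ\tilde\psi=d\,\ell$ with $d\equiv u\pmod{40}$. So $\ell(\tilde\psi(A))=d\Z$, and no lift can be surjective unless $u=\pm1$. Conversely, if $u=\pm1$, take lifts of $\psi$ and $\psi^{-1}$ of degrees $d=e=\pm1$. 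Then $1+40m=de=1$ forces $m=0$, and your composite argument works.

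So the whole theorem rests on the claim you defer (``I would first show that $\psi$ preserves, up to sign, the degree map''). You give no argument for it. The group $(\Z/40\Z)^\times$ has sixteen elements, and nothing in your proposal excludes, say, $u=21$. The fallback via periodic elements is too vague to fill this.

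Proving $u=\pm1$ is exactly the substantive content of the paper's proof:
\begin{itemize}
\item Since $F=\Ker\pi$ is fully characteristic (Lemma~\ref{lem:F-fully-char}), a lift $\varphi$ induces a non-cyclic endomorphism $\varphi_A$ of $A[A_4]$.
\item By Chen--Kordek--Margalit/Orevkov, $\varphi_A=\conj_g\circ\gamma_p\circ\chi_A^\varepsilon$. Writing $\ell_A$ for the exponent sum on $A[A_4]$, we have $\ell=\ell_A\circ\pi$, so $d=\pm(1+20p)$, that is, $u\in\{\pm1,\pm21\}$.
\item Lemma~\ref{lem:p-even} excludes $p$ odd, i.e.\ $u=\pm21$. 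It compares the actions of $s_i$ and $\varphi_A(s_i)$ on $F_{ab}$, where $\rho_*(\Delta^2)=-I$ and the automorphism $\theta=\varphi|_F$ conjugates one action to the other.
\item Only then does a central transvection give a lift $\varphi'$ with $\varphi'_A=\conj_g\circ\chi_A^\varepsilon$. This $\varphi'$ is an automorphism by the five lemma applied to $1\to F\to A\to A[A_4]\to1$.
\end{itemize}
To repair your proposal you need some input of this strength that establishes $u=\pm1$.
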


Thus the descriptions of $\Aut(A[D_5])$ and of $\Aut(A[D_5]/Z(A[D_5]))$ have the same form as the corresponding descriptions obtained by Castel and Paris for $A[D_n]$ with odd $n\ge6$; see \cite[Corollary~2.6\,(2)]{CasPar1}.

This paper is a continuation of the series of articles \cite{CasPar1,ParSor1,ParSor2} devoted to classification of automorphisms and endomorphisms of Artin groups that admit geometric representations as subgroups of mapping class groups of surfaces. The key ingredients of the proofs are descriptions of homomorphisms from braid groups to themselves and to mapping class groups, obtained in a series of earlier works~\cite{Caste0,Caste1,ChKoMa1,Orevk1}. 
 
Our paper is organized as follows. In Section~\ref{sec2} we prove that the kernel of the natural quotient map $A[D_5]\to A[A_4]$ is fully characteristic, and use the Crisp--Paris description of automorphisms preserving this kernel to prove Theorem~\ref{thm:1}. In Section~\ref{sec3} we prove Theorem~\ref{thm:2}. The main point is to show that a lift of an automorphism of $A[D_5]/Z(A[D_5])$ can be modified by a central transvection so as to become an automorphism of $A[D_5]$. Finally, in Section~\ref{sec4} we apply Theorem~\ref{thm:1} to fill the missing type $D_5$ case in a result of Jones--Mangioni--Sartori~\cite{JoMaSa1} on the strong twist conjecture.

\begin{acknow}
This work was started at the AIM workshop ``Geometry and topology of Artin groups'' organized by Ruth Charney, Kasia Jankiewicz, and Kevin Schreve in September 2023, and continued at the SLMath workshop ``Hot Topics: Artin Groups and Arrangements -- Topology, Geometry, and Combinatorics'' organized by Christin Bibby, Ruth Charney, Giovanni Paolini, and Mario Salvetti in March 2024, and during the second author's visit to Universit\'e Bourgogne Europe in May--June 2025. We thank the organizers, AIM, SLMath and Universit\'e Bourgogne Europe for hospitality.
The first author is partially supported by the French project ``CaGeT'' (ANR-25-CE40-4162) of the ANR.
The IMB receives support from the EIPHI Graduate School (contract ANR-17-EURE-0002).
\end{acknow}


\section{\texorpdfstring{Automorphisms of $A[D_5]$}{Automorphisms of A[D5]}}\label{sec2}
The Artin group $A[A_4]$ will play an important auxiliary role in our work. Its Coxeter graph $A_4$ is shown on the right
in Figure~\ref{fig:d5a4}. It appears as the image of the quotient map from $A[D_5]$ obtained by identifying the two terminal generators $t_4$ and $t_5$ of the Coxeter graph $D_5$, see the epimorphism $\pi$ below. We denote the standard generators of $A[A_4]$ by $s_1,\dots,s_4$. 

Let $\Delta$ be the Garside element of $A[A_4]$, and let $\Delta_D$ be the Garside element of $A[D_5]$. It is known that $Z(A[A_4])=\langle \Delta^2\rangle$ and $Z(A[D_5])=\langle \Delta_D^2\rangle$; see~\cite[Satz~7.2]{BriSai1}. We will use the following expression of the element $\Delta^2$ as a product of standard generators: $\Delta^2=(s_1s_2s_3s_4)^5$, see for example~\cite[\S5.8, Zusatz~(i)]{BriSai1}.
Moreover, the conjugation by $\Delta_D$ induces the nontrivial graph automorphism of $A[D_5]$, namely
\[
\Delta_D t_i\Delta_D^{-1}=t_i\quad(1\le i\le 3),\qquad
\Delta_D t_4\Delta_D^{-1}=t_5,\qquad
\Delta_D t_5\Delta_D^{-1}=t_4.
\]
We now define a few useful homomorphisms, following the notation of~\cite{CasPar1}.

\underline{$\pi\colon A[D_5]\to A[A_4]$}: \quad$\pi(t_i)=s_i \quad (1\le i\le 3), \qquad \pi(t_4)=\pi(t_5)=s_4$.

It is known that $\pi(\Delta_D)=\Delta^2$, see~\cite[Lemma~5.1 and the proof of Theorem~2.8]{CasPar1}.

\underline{$\alpha_p\colon A[D_5]\to A[A_4]$}:\quad For $p\in\Z$ define
\[
\alpha_p(t_i)=s_i\Delta^{2p}\quad (1\le i\le 3),
\qquad
\alpha_p(t_4)=\alpha_p(t_5)=s_4\Delta^{2p}.
\]
Since $\Delta^2$ is central in $A[A_4]$, $\alpha_p$ preserves the defining relations of $A[D_5]$ and hence indeed defines a homomorphism $A[D_5]\to A[A_4]$. Observe that $\alpha_0=\pi$.

\underline{$\gamma_p\colon A[A_4]\to A[A_4]$}:\quad For $p\in\Z$ define
\[
\gamma_p(s_i)=s_i\Delta^{2p}\qquad (1\le i\le 4).
\]
By the same argument as above, $\gamma_p$ is a homomorphism. Observe that $\alpha_p=\gamma_p\circ \pi$.

\underline{$\chi\colon A[D_5]\to A[D_5]$}:\quad Recall that $\chi$ was defined as:
\[
\chi(t_i)=t_i^{-1},\qquad (1\le i\le 5).
\]
\underline{$\chi_A\colon A[A_4]\to A[A_4]$}:\quad We define $\chi_A$ in a similar way:
\[
\chi_A(s_i)=s_i^{-1},\qquad (1\le i\le 4).
\]

For the rest of the paper, denote
\[
F:=\Ker\pi.
\]
By Crisp--Paris~\cite[Proposition~2.3\,(2)]{CriPar1}, $F$ is a free group of rank $4$, and $A[D_5]\simeq F\rtimes A[A_4]$, where the embedding $A[A_4]\hookrightarrow A[D_5]$ is given by $s_i\mapsto t_i$, $1\le i\le4$.

If $G$ is a group and $g\in G$, then we denote by $\conj_g\colon G\to G$, $h\mapsto ghg^{-1}$, the conjugation by $g$. We say that two homomorphisms $\varphi_1,\varphi_2\colon G\to H$ are \emph{conjugate} if there exists $h\in H$ such that $\varphi_2=\conj_h\circ\varphi_1$. A homomorphism $\varphi\colon G\to H$ is called \emph{cyclic} if its image is a cyclic subgroup of $H$. If $G$ is either $A[A_4]$ or $A[D_5]$, then a homomorphism $\varphi\colon G\to H$ is cyclic if and only if there exists $h\in H$
such that all standard generators of $G$ are sent by $\varphi$ to $h$.

\begin{lem}\label{lem:surj}
Let $\varphi\colon A[D_5]\to A[A_4]$ be a surjective homomorphism. Then $\varphi$ is conjugate either to $\pi$ or to $\pi\circ \chi$. In particular, $\Ker(\varphi)=F$.
\end{lem}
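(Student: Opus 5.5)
The plan is to restrict $\varphi$ to the standard parabolic subgroups of $A[D_5]$ isomorphic to $A[A_4]$ and use the known classification of homomorphisms from $A[A_4]$ to itself. The subgroup generated by $t_1,t_2,t_3,t_4$ is isomorphic to $A[A_4]$, and so is the one generated by $t_1,t_2,t_3,t_5$. By the results on endomorphisms of braid groups $\mathcal B_5=A[A_4]$ (Castel, Chen--Kordek--Margalit, Orevkov), every endomorphism of $A[A_4]$ is either cyclic, or conjugate to $\gamma_p$ or to $\gamma_p\circ\chi_A$ for some $p\in\Z$. So I would write $\varphi_4$ and $\varphi_5$ for the restrictions of $\varphi$ to these two parabolic subgroups, each composed with the obvious identification with $A[A_4]$, and split into cases.

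\emph{Step 1: both restrictions are cyclic.} Suppose $\varphi_4$ is cyclic. Then $\varphi(t_1)=\varphi(t_2)=\varphi(t_3)=\varphi(t_4)=h$. Since $t_5$ satisfies the braid relation with $t_3$ and commutes with $t_1,t_2,t_4$, $\varphi_5$ cannot be of $\gamma_p$-type: that type would force $\varphi(t_1)\neq\varphi(t_2)$. Hence $\varphi_5$ is also cyclic, and then $\varphi(t_5)=h$ as well. So $\varphi$ is cyclic, hence not surjective, because $A[A_4]$ is not cyclic (its abelianization is $\Z$, but it is nonabelian). Thus $\varphi_4$ is noncyclic, and by the same argument so is $\varphi_5$.

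\emph{Step 2: both restrictions are noncyclic.} Up to conjugating $\varphi$, we may assume $\varphi_4=\gamma_p$ or $\varphi_4=\gamma_p\circ\chi_A$. Replacing $\varphi$ by $\varphi\circ\chi$ if needed (note that $\chi$ preserves the parabolic subgroup), we may assume $\varphi(t_i)=s_i\Delta^{2p}$ for $i\le4$. Next we determine $x=\varphi(t_5)$. It commutes with $s_1\Delta^{2p}$, $s_2\Delta^{2p}$ and $s_4\Delta^{2p}$, hence with $s_1,s_2,s_4$ because $\Delta^{2p}$ is central. It also satisfies the braid relation with $s_3\Delta^{2p}$. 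Meanwhile $\varphi_5$ is conjugate to $\gamma_q$ or $\gamma_q\circ\chi_A$ and agrees with $\varphi_4$ on $t_1,t_2,t_3$. Comparing abelianizations (the exponent sum of $s_1\Delta^{2p}$ is $1+40p$) gives $q=p$ in the non-inverted case; the inverted case would give $1+40p=-1-40q$, which is impossible mod $40$. So $\varphi_5=\conj_g\circ\gamma_p$, where $g$ centralizes $s_1,s_2,s_3$.

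\emph{Step 3: pin down $x$.} We have $x=gs_4g^{-1}\Delta^{2p}$, and $g$ centralizes $\langle s_1,s_2,s_3\rangle$. The centralizer of this parabolic subgroup $A[A_3]$ in $\mathcal B_5$ is known: it is generated by $\Delta^2$, the center $\Delta_{A_3}^2$ of the parabolic, and the conjugates realizing the fifth strand. Concretely it is $\langle \Delta_{A_3}^2,\Delta^2\rangle$, up to terms that commute with $s_4$, and one also needs $x$ to commute with $s_4$. I would use these facts to show that $gs_4g^{-1}$ commuting with $s_4$ forces $gs_4g^{-1}=s_4$. In other words, I need $g$ to lie in the centralizer of $s_4$ as well; plausibly one shows $s_4$ and $gs_4g^{-1}$ are commuting conjugate half-twists sharing the strand structure, which forces equality. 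This is the step I expect to be the main obstacle.

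\emph{Step 4: conclude.} Once $x=s_4\Delta^{2p}$, we get $\varphi=\alpha_p=\gamma_p\circ\pi$. Surjectivity forces $\gamma_p$ to be surjective. Abelianizing, its image has exponent sums in $(1+40p)\Z$, which must equal $\Z$, so $p=0$ (or $1+40p=-1$, which is impossible). Hence $\varphi=\pi$ up to conjugation and $\chi$. Finally, $\Ker(\pi\circ\chi)=\chi^{-1}(F)=F$, since $\pi\circ\chi=\chi_A\circ\pi$ and $\chi_A$ is an automorphism. This gives $\Ker(\varphi)=F$ in every case.
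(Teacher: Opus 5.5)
\textbf{There is a genuine gap in Step 3.} Your Steps 1, 2 and 4 are sound, apart from a harmless slip: $\ell(\Delta^2)=20$, so $s_1\Delta^{2p}$ has exponent sum $1+20p$, not $1+40p$. The comparisons giving $q=p$ and $p=0$ go through unchanged.

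Step 3 is left at ``plausibly''. Until you know $\varphi(t_5)=s_4\Delta^{2p}$, you have not shown that $\varphi$ is conjugate to $\alpha_p$ or to $\alpha_p\circ\chi$, and that is the whole content of the lemma before the exponent-sum step. Your description of the centralizer ``up to terms that commute with $s_4$'' is also not a usable statement. The paper avoids all this by citing \cite[Theorem~2.1]{CasPar1}. That theorem already classifies every homomorphism $A[D_5]\to A[A_4]$ as cyclic or conjugate to $\alpha_p\circ\psi$ with $\psi\in\langle\chi\rangle$, so only your Step 4 is needed. Your route amounts to reproving that theorem for $n=5$ from the endomorphism classification of $A[A_4]$ (\cite{ChKoMa1}, \cite{Orevk1}). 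That is legitimate, but then Step 3 must actually be proved.

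Two facts close it.

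(a) The centralizer of $\langle s_1,s_2,s_3\rangle$ in $A[A_4]$ is exactly $\langle\Delta_{A_3}^2,\Delta^2\rangle$. View $A[A_4]$ as the mapping class group of a disk $D$ with punctures $p_1,\dots,p_5$, with $\langle s_1,s_2,s_3\rangle$ supported in a subdisk $D'$ containing $p_1,\dots,p_4$. This subgroup contains an element that is pseudo-Anosov on $D'$ and the identity outside it. Its canonical reduction system is $\{\partial D'\}$, so any centralizing $g$ preserves $\partial D'$. Then $g$ splits as a piece supported in $D'$ and a piece supported on the complementary pair of pants. The first piece is central in the $4$-strand braid group, hence a power of $T_{\partial D'}=\Delta_{A_3}^2$. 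The second is a product of boundary twists. So $g=\Delta_{A_3}^{2a}\Delta^{2b}$.

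(b) Then $x\Delta^{-2p}=\Delta_{A_3}^{2a}s_4\Delta_{A_3}^{-2a}$ must commute with $s_4$. Squaring both, the Dehn twists $T_c=s_4^2$ and $T_{T_d^a(c)}$ commute, where $c$ surrounds $p_4,p_5$ and $d=\partial D'$. Hence $0=i\bigl(c,T_d^a(c)\bigr)=|a|\,i(c,d)^2=4|a|$, so $a=0$ and $x=s_4\Delta^{2p}$.

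With (a) and (b) in place, $\varphi$ equals $\alpha_p$ up to conjugation and $\chi$, and your Step 4 concludes the proof.
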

\begin{proof}
By~\cite[Theorem~2.1]{CasPar1}, an arbitrary homomorphism $\varphi\colon A[D_5]\to A[A_4]$ has one of the two forms: 
\begin{enumerate}
\item either $\varphi$ is cyclic, or
\item there exist $\psi\in\langle\chi\rangle$ and $p\in\Z$ such that $\varphi$ is conjugate to $\alpha_p\circ\psi$.
\end{enumerate}
Since $\varphi$ is surjective, case (1) is impossible, so $\varphi$ is conjugate to $\alpha_p\circ\psi$ for some $p\in\Z$ and $\psi\in\langle\chi\rangle$. Since $\psi$ is an automorphism of $A[D_5]$, it does not affect surjectivity of $\varphi$, hence it is enough to analyze when $\alpha_p$ is surjective. Arguing as in the proof of~\cite[Corollary~2.5]{CasPar1}, define the exponent sum homomorphism
$\ell\colon A[A_4]\to\Z$ which sends $s_i\mapsto1$ for all $i$. It is known that $\Delta^2=(s_1s_2s_3s_4)^5$ and hence $\ell(\Delta^2)=20$. In particular, $\ell(\alpha_p(t_i))=\ell(s_j\Delta^{2p})=1+20p$, for all $i,j$. Therefore, $\Im(\ell\circ\alpha_p)\subseteq (1+20p)\Z$. Since $\alpha_p$ is assumed to be surjective, this is only possible if $p=0$. Hence $\varphi$ is conjugate to $\alpha_0\circ\psi$, which is equal to either $\pi$ or $\pi\circ\chi$. Since $\pi\circ\chi=\chi_A\circ\pi$, and since conjugation in the codomain does not change the kernel, we conclude that $\Ker\varphi=\Ker\pi=F$.
\end{proof}

Recall that a subgroup $N$ of a group $G$ is called \emph{characteristic} if for every automorphism $\Phi\in\Aut(G)$ we have $\Phi(N)=N$, and $N$ is called \emph{fully characteristic} if for every endomorphism $\varphi\in\End(G)$ we have $\varphi(N)\subseteq N$.

\begin{lem}\label{lem:F-fully-char}
Let $\pi\colon A[D_5]\to A[A_4]$ 
be the epimorphism defined above, and let $F=\Ker\pi$. Then $F$ is fully
characteristic in $A[D_5]$.
\end{lem}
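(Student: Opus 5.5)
Given an endomorphism $\varphi$ of $A[D_5]$, we study the composite $\pi\circ\varphi\colon A[D_5]\to A[A_4]$. By \cite[Theorem~2.1]{CasPar1}, as recalled in the proof of Lemma~\ref{lem:surj}, this homomorphism is either cyclic, or conjugate to $\alpha_p\circ\psi$ for some $p\in\Z$ and $\psi\in\langle\chi\rangle$. In both cases we will show that $F\subseteq\Ker(\pi\circ\varphi)$, which is exactly the claim $\varphi(F)\subseteq\Ker\pi=F$.

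\emph{Cyclic case.} Suppose all generators $t_i$ are sent by $\pi\circ\varphi$ to a single element $h$. Then $\pi\circ\varphi$ factors through the abelianization $A[D_5]\to\Z$, $t_i\mapsto 1$. This is because $D_5$ is connected with all labels odd, so all generators are conjugate and the abelianization is $\Z$. It therefore suffices to check that $F$ lies in the kernel of this abelianization map. The exponent-sum map $\ell_D\colon A[D_5]\to\Z$ factors as $\ell\circ\pi$, since $\pi$ sends generators to generators. Hence $F=\Ker\pi\subseteq\Ker\ell_D$, and so $F$ is killed by $\pi\circ\varphi$.

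\emph{Non-cyclic case.} Here $\pi\circ\varphi=\conj_g\circ\alpha_p\circ\psi$, and we have $\alpha_p=\gamma_p\circ\pi$. Since $\chi(F)=F$ (because $\pi\circ\chi=\chi_A\circ\pi$), we get $\Ker(\alpha_p\circ\psi)=\psi^{-1}(\Ker\alpha_p)\supseteq\psi^{-1}(F)=F$. Conjugation does not change kernels, so $F\subseteq\Ker(\pi\circ\varphi)$ again.

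The only delicate point is making sure the cyclic case is correctly handled. One must use that the cyclic homomorphism factors through the exponent-sum map, and that this map is trivial on $F$. Both facts follow from the relations of $A[D_5]$ being homogeneous and from $\pi$ preserving word length, so no real obstacle is expected. Everything else is a direct application of \cite[Theorem~2.1]{CasPar1}.
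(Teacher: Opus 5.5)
Your proof is correct and follows the same route as the paper: you apply the Castel--Paris classification to $\pi\circ\varphi$ and treat the cyclic and non-cyclic cases separately. The differences are minor. In the non-cyclic case you prove only $F\subseteq\Ker(\pi\circ\varphi)$, which is all the lemma needs, whereas the paper also checks that $\gamma_p$ is injective to obtain the sharper equality $\varphi^{-1}(F)=F$. In the cyclic case you kill $F$ through the exponent-sum map $\ell_D=\ell\circ\pi$, instead of using that $F$ is the normal closure of $t_5t_4^{-1}$.
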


\begin{proof}
Let $\varphi\colon A[D_5]\to A[D_5]$ be an arbitrary endomorphism, and consider $\lambda=\pi\circ\varphi:A[D_5]\to A[A_4]$. By \cite[Theorem~2.1]{CasPar1}, either $\lambda$ is cyclic, or $\lambda$ is conjugate to $\alpha_p\circ\psi$ for some $p\in\Z$ and $\psi\in\langle\chi\rangle$.

Suppose first that $\lambda$ is not cyclic. Write $\alpha_p=\gamma_p\circ\pi$, and observe that $\gamma_p$ is injective. Indeed, consider again the exponent-sum homomorphism $\ell\colon A[A_4]\to\Z$, which sends $s_i\mapsto1$ for all $i$. Since $\Delta^2$ is central, for any $g\in A[A_4]$ we have $\gamma_p(g)=g\Delta^{2p\,\ell(g)}$. Suppose that $\gamma_p(g)=1$. Then $g=\Delta^{-2p\,\ell(g)}$. Applying $\ell$ and using $\ell(\Delta^2)=20$, we get $\ell(g)=-20p\,\ell(g)$. Hence $(1+20p)\,\ell(g)=0$, so $\ell(g)=0$. Hence $g=\Delta^{-2p\cdot0}=1$, and $\gamma_p$ is injective.

We have: $\lambda$ is conjugate to $\gamma_p\circ\pi\circ\psi$, $\gamma_p$ is injective, $\psi(F)=F$ (since $\psi\in\langle\chi\rangle$ and $\pi\circ\chi=\chi_A\circ\pi$), and conjugating the image does not change the kernel. Therefore $\Ker\lambda=\Ker\pi=F$. On the other hand, $\Ker\lambda=\Ker(\pi\circ\varphi)=\varphi^{-1}(F)$. Hence $\varphi^{-1}(F)=F$, and, in particular, $\varphi(F)\subseteq F$.

Suppose now that $\lambda$ is cyclic. Then there exists $h\in A[A_4]$ such that $\lambda(t_i)=h$ for all $i$. In particular, $\lambda(t_5t_4^{-1})=1$. The subgroup $F=\Ker\pi$ is the normal closure of $t_5t_4^{-1}$ in $A[D_5]$, since $\pi$ is the quotient map obtained by identifying $t_4$ and $t_5$. Therefore $\lambda(F)=1$, i.e.\ $\pi(\varphi(F))=1$, so $\varphi(F)\subseteq F$.

Thus every endomorphism of $A[D_5]$ preserves $F$.
\end{proof}

\begin{proof}[Proof of Theorem~\ref{thm:1}]
Crisp and Paris proved in~\cite[Theorem~4.9]{CriPar1} that the subgroup $\Aut(A[D_5],F)$ consisting of all automorphisms of $A[D_5]$ preserving $F$, has the form:
\[
\Aut(A[D_5],F)=\Inn(A[D_5])\rtimes\langle\chi\rangle\simeq A[D_5]/Z(A[D_5])\rtimes C_2.
\]
It follows from Lemma~\ref{lem:F-fully-char} that the subgroup $F$ is characteristic, hence $\Aut(A[D_5],F)=\Aut(A[D_5])$, and the theorem follows.
\end{proof}

\begin{rem}\label{rem:endos}
In addition to automorphisms, Castel and Paris also give a classification of all
endomorphisms of $A[D_n]$ for $n\ge6$.  Their proof combines two ingredients:
\begin{itemize}
\item a classification of homomorphisms $A[D_n]\to A[A_{n-1}]$,~\cite[Theorem~2.1]{CasPar1}, 
\item a classification of homomorphisms $A[A_{n-1}]\to A[D_n]$,~\cite[Theorem~2.2]{CasPar1}.
\end{itemize}
The first ingredient is still available for $D_5$, since~\cite[Theorem~2.1]{CasPar1} applies for $n\ge5$. It is based on a classification of endomorphisms of $A[A_{n-1}]$, which is given for $n=5$ independently in~\cite{ChKoMa1} and in~\cite{Orevk1}. However, the second ingredient, Theorem~2.2 of Castel--Paris, essentially uses Castel's classification of maps from $A[A_{n-1}]$ to the mapping class group of the surface of genus $\lfloor(n-1)/2\rfloor$ with one or two boundary components, after applying the Perron--Vannier embedding of $A[D_n]$. This classification is available to us only for $n\ge 6$, via the earlier work of Castel~\cite[Theorem~1]{Caste1}.  To describe endomorphisms of $A[D_5]$ by the same route, one would need the genus $2$ analogue of Castel's result: namely, the statement that every non-cyclic homomorphism from $A[A_4]$ to the mapping class group of a genus $2$ surface with one boundary component is obtained from the Birman--Hilden $4$-chain Dehn twist representation by conjugating with a homeomorphism, applying a central transvection, and possibly precomposing with the inversion automorphism of $A[A_4]$. We do not have a proof of such a statement at present. However, \emph{surjective} endomorphisms of $A[D_5]$ admit a complete description and are given in Theorem~\ref{thm:1}, since all of them are automorphisms. Indeed, by the results of Digne~\cite{Digne1} and Cohen--Wales~\cite{CohWal1}, Artin groups of spherical type are linear, and, being finitely generated, they are Hopfian by Malcev's theorem (which means that every surjective endomorphism is an automorphism).
\end{rem}

\section{\texorpdfstring{Automorphisms of $A[D_5]/Z(A[D_5])$}{Automorphisms of A[D5]/Z(A[D5])}}\label{sec3}

Denote for brevity $\ov{A[D_5]}=A[D_5]/Z(A[D_5])$ and let $\xi\colon A[D_5]\to A[D_5]/Z(A[D_5])$ be the canonical projection. Note that if an endomorphism $\varphi\colon A[D_5]\to A[D_5]$ has the property $\varphi(Z(A[D_5]))\subseteq Z(A[D_5])$, it induces a natural endomorphism $\ov\varphi\colon\ov{A[D_5]}\to \ov{A[D_5]}$. Let $\phi\colon \ov{A[D_5]}\to \ov{A[D_5]}$ be an arbitrary endomorphism. If there exists an endomorphism $\varphi\colon A[D_5]\to A[D_5]$ making the following diagram commutative
\begin{center}
\setlength\mathsurround{0pt}
\begin{tikzcd}
A[D_5] \arrow[r,"\varphi"] \arrow[d,"\xi"] & A[D_5]\arrow[d,"\xi"]\\
\ov{A[D_5]}\arrow[r,"\phi"] & \ov{A[D_5]}
\end{tikzcd}
\end{center}
we say that $\phi$ \emph{lifts} and that $\varphi$ is a \emph{lift} of $\phi$. Notice that any lift $\varphi$ of any endomorphism $\phi$ of $\ov{A[D_5]}$ must have property $\varphi(Z(A[D_5]))\subseteq Z(A[D_5])$.

Now let $\phi\in\Aut(\ov{A[D_5]})$ be an arbitrary automorphism. By~\cite[Proposition~2.7]{CasPar1}, $\phi$ admits a lift
$\varphi\colon A[D_5]\to A[D_5]$, which is a priori only an endomorphism. We will show that this lift can be modified, without changing the induced automorphism of $\ov{A[D_5]}$, so that it becomes an automorphism of $A[D_5]$.

Let $\varphi\colon A[D_5]\to A[D_5]$ be a fixed lift of $\phi$. Recall that by~\cite[Proposition~2.3\,(2)]{CriPar1} we have
\[
A[D_5]\simeq F\rtimes A[A_4],
\]
where $F=\Ker\pi$ is a free group of rank $4$, and $A[A_4]=\Im\pi$ is identified with the subgroup of $A[D_5]$ generated by $t_1,\dots,t_4$. By Lemma~\ref{lem:F-fully-char}, the subgroup $F$ is fully characteristic in $A[D_5]$. Hence $\varphi(F)\subseteq F$, and $\varphi$ induces an endomorphism
\[
\varphi_A\colon A[A_4]\to A[A_4]
\] 
of $A[D_5]/F=A[A_4]$ by $\varphi_A(\pi(x)):=\pi(\varphi(x))$, for all $x\in A[D_5]$.

We claim that $\varphi_A$ is non-cyclic. Since $\phi$ is surjective on $\ov{A[D_5]}$, we have
$A[D_5]=\varphi(A[D_5])\,Z(A[D_5])$. Applying $\pi$ gives $A[A_4]=\pi(A[D_5])=\pi(\varphi(A[D_5]))\,\pi(Z(A[D_5]))$.
The subgroup $\pi(Z(A[D_5]))$ is central in $A[A_4]$. Thus, if $\pi(\varphi(A[D_5]))=\varphi_A(A[A_4])$ were cyclic, then $A[A_4]$ would be abelian, a contradiction. Therefore $\varphi_A$ is non-cyclic.

By~\cite[Corollary~1.2]{ChKoMa1}, or by~\cite[Theorem~1.6]{Orevk1}, there exist $g\in A[A_4]$, $\varepsilon\in\{0,1\}$, and $p\in\Z$ such that
\begin{equation}\label{eq:normal-form}
\varphi_A=\conj_g\circ\gamma_p\circ\chi_A^\varepsilon\,,
\end{equation}
where $\gamma_p(s_i)=s_i\Delta^{2p}$ for $1\le i\le4$. The next lemma is our key observation that the exponent $p$ in this representation must be even.

\begin{lem}\label{lem:p-even}
Let $\phi\in\Aut(\ov{A[D_5]})$, and let $\varphi$, $\varphi_A$, and $p$ be
defined as above. Then $p$ is even.
\end{lem}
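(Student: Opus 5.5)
The plan is to normalize $\varphi$ so that $\varphi_A=\gamma_p$. Then I compare how $\varphi$ acts on the free kernel $F$ with the conjugation action of $A[A_4]$ on $F$. The point is that when $p$ is odd, the factor that any lift must carry in front of $\Delta^{2p}$ behaves like an odd power of $\Delta_D$. Conjugation by $\Delta_D$ is the graph symmetry $\tau$ ($t_4\leftrightarrow t_5$), which acts on $F$ by a "twist" that cannot be absorbed.

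\emph{Step 1 (normalization).} Replace $\varphi$ by $\conj_{\tilde g}^{-1}\circ\varphi\circ\chi^{\varepsilon}$, where $\tilde g\in A[D_5]$ is the copy of $g$ in the subgroup generated by $t_1,\dots,t_4$, and use $\pi\circ\chi=\chi_A\circ\pi$. This is still a lift of an automorphism of $\ov{A[D_5]}$, and the parameter $p$ is unchanged. So we may assume $\varphi_A=\gamma_p$. Since $\pi(\Delta_D)=\Delta^2$, we get $\pi(\varphi(t_i))=\pi(t_i\Delta_D^{p})$ for $i\le4$ and $\pi(\varphi(t_5))=\pi(t_4\Delta_D^p)$. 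Hence $\varphi(t_i)=u_i\Delta_D^{p}$ with $u_i\in F\,t_i$ for $i\le 4$ and $u_5\in F\,t_4$.

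\emph{Step 2 (linearize on $F$).} By Lemma~\ref{lem:F-fully-char}, $\varphi$ restricts to an endomorphism of $F$, and hence induces a $\Z$-linear map $L$ on $M:=F^{ab}\simeq\Z^4$. The group $A[D_5]$ acts on $M$ by conjugation, and $F$ acts trivially, so this is an action $\rho$ of $A[A_4]$. For $x\in A[D_5]$ and $f\in F$ we have $\varphi(xfx^{-1})=\varphi(x)\varphi(f)\varphi(x)^{-1}$. Together with Step~1 this gives the intertwining relation
\[
L\circ\rho(s_i)=\rho(s_i)\,T^{p}\circ L\qquad(1\le i\le4),
\]
where $T$ is the automorphism of $M$ induced by $\conj_{\Delta_D}$, i.e.\ by $\tau$. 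Here I use that $\Delta_D^2$ is central, so $T^2=1$ and only the parity of $p$ matters. Since $\conj_{\Delta_D}$ commutes with $t_1,t_2,t_3$ and swaps $t_4,t_5$, one checks that $T$ normalizes the image of $\rho$. So $s_i\mapsto\rho(s_i)T$ is again a representation, $\rho^{T}$.

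\emph{Step 3 ($L$ is rationally invertible).} I want $L\otimes\mathbb Q$ to be an isomorphism. I would get this from the surjectivity of $\phi$, i.e.\ $A[D_5]=\varphi(A[D_5])\,Z(A[D_5])$. We have $Z(A[D_5])\cap F=1$, because $\pi(\Delta_D^{2k})=\Delta^{4k}\neq1$ for $k\neq0$. It follows that $F\subseteq\varphi(A[D_5])Z$ forces $F\subseteq\varphi(F)\cdot(\text{something controlled})$. If this is delicate, I would instead compose $\varphi$ with a lift of $\phi^{-1}$. The composite lifts the identity of $\ov{A[D_5]}$, so it is a central transvection, and central transvections act on $F$ (hence on $M$) as the identity, because $Z\cap F=1$ and $\conj$ by central elements is trivial. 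This gives a left inverse of $L$ up to the twist, so $L$ is invertible over $\mathbb Q$.

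\emph{Step 4 (contradiction for $p$ odd).} If $p$ is odd, Step~2 says that $\rho\otimes\mathbb Q$ and $\rho^{T}\otimes\mathbb Q$ are isomorphic representations of $A[A_4]$, so they have equal characters. I expect $T$ to act very simply on $M$: $\tau$ sends the basic generator $t_5t_4^{-1}$ of $F$ to its inverse, which suggests $T=-\mathrm{id}$, possibly up to composing with an element of $\rho(A[A_4])$. In that case $\rho^{T}(s_i)=-\rho(s_i)$, and I would compare traces, e.g.\ $\operatorname{tr}\rho(s_1)\neq-\operatorname{tr}\rho(s_1)$, or compare determinants or eigenvalues of $\rho(s_4)$, computed from the explicit Crisp--Paris free basis of $F$. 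This rules out $p$ odd. The main obstacle is exactly this step: computing $T$ and the $\rho(s_i)$ on $F^{ab}$ explicitly, and finding a character value that distinguishes $\rho$ from $\rho^{T}$. If a trace argument fails, I would try the characteristic polynomial of $\rho(s_1s_2s_3s_4)$, whose fifth power is $\rho(\Delta^2)$.
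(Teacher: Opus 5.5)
Your outline is essentially the paper's proof. The ingredients match: invertibility of $\varphi|_F$, where your route via a lift of $\phi^{-1}$ works because the composite is a central transvection that preserves $F$ and hence is the identity on $F$ (as $F\cap Z(A[D_5])=1$), giving a genuine inverse over $\Z$ with no twist; then the intertwining relation on $F_{ab}$, the involution $T=\rho(\Delta^2)$ induced by $\Delta_D$, and a spectral contradiction for odd $p$. The step you leave open closes exactly as you hoped, and it is what the paper computes: $F$ is the normal closure of $x_1=t_5t_4^{-1}$, $\conj_{\Delta_D}(x_1)=x_1^{-1}$, and $T$ commutes with $\rho$, so $T=-\mathrm{id}$ on $F_{ab}$ with no correction term; the Crisp--Paris formulas make every $\rho(s_i)$ unipotent on $F_{ab}$, so $\operatorname{tr}\rho(s_1)=4\neq-4$.
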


\begin{proof}
We prove the lemma in three steps. First, we use the fact that $\phi$ is an automorphism of $\ov{A[D_5]}$ to show that the restriction
$\theta=\varphi|_F$ is an automorphism of $F$. Second, we use the semidirect product structure $A[D_5]\simeq F\rtimes A[A_4]$ to compare, on the abelianization $F_{ab}$ of $F$, the action of an element $a\in A[A_4]$ with the action of its image $\varphi_A(a)$. Third, we apply the explicit Crisp--Paris formulas to this comparison: it turns out that the standard generators $s_i$ act with the characteristic polynomial $(X-1)^4$, while an odd value of $p$ would force $\varphi_A(s_i)$ to have characteristic polynomial $(X+1)^4$ on $F_{ab}$, which yields a contradiction.

Let $\theta=\varphi|_F$ and denote $\overline F=\xi(F)$. We show that $\theta$ is an automorphism of $F$. Since $F$ is fully characteristic by Lemma~\ref{lem:F-fully-char}, and since $\varphi$ is a lift of $\phi$, we have $\phi(\xi(f))=\xi(\varphi(f))\in \xi(F)$ for every $f\in F$. Thus $\phi(\overline F)\subseteq \overline F$. Applying the same argument to $\phi^{-1}$ (using some lift of $\phi^{-1}$), we get $\phi^{-1}(\overline F)\subseteq \overline F$, hence $\phi(\overline F)=\overline F$. Since $F\cap Z(A[D_5])=1$, the restriction $\xi|_F\colon F\to\overline F$ is an isomorphism. Moreover, for every $f\in F$ we have $\xi(\theta(f))=\xi(\varphi(f))=\phi(\xi(f))$. Therefore $\theta=(\xi|_F)^{-1}\circ\phi|_{\overline F}\circ\xi|_F$, and so $\theta$ is an automorphism of $F$.

Let $\rho\colon A[A_4]\to\Aut(F)$ be the homomorphism underlying the semidirect product $A[D_5]\simeq F\rtimes A[A_4]$, i.e.\ for any $a\in A[A_4]$ and any $x\in F$, we have 
\[
a\,x\,a^{-1}=\rho(a)(x).
\]
Write the image of $a$ under $\varphi$ in the semidirect product form as
\[
\varphi(a)=f_a\cdot\varphi_A(a),\qquad \text{with\quad } f_a\in F.
\]
Applying $\varphi$ to the identity $a\,x\,a^{-1}=\rho(a)(x)$ gives
\[
\varphi(a)\,\varphi(x)\,\varphi(a)^{-1} = \varphi(\rho(a)(x)).
\]
Since $x$ and $\rho(a)(x)$ lie in $F$, and $\theta=\varphi|_F$, this becomes
\[
f_a\,\varphi_A(a)\,\theta(x)\,\varphi_A(a)^{-1}\,f_a^{-1} = \theta(\rho(a)(x)).
\]
Equivalently,
\[
\conj_{f_a}\bigl(\rho(\varphi_A(a))(\theta(x))\bigr)
=
\theta(\rho(a)(x)).
\]
Since this holds for every $x\in F$, we get an equality in $\Aut(F)$:
\[
\theta\circ\rho(a)=\conj_{f_a}\circ\,\rho(\varphi_A(a))\circ\theta,
\]
which is equivalent to
\[
\theta\,\circ\rho(a)\,\circ\theta^{-1} = \conj_{f_a}\circ\,\rho(\varphi_A(a)).
\]
Denote by $\theta_*$ and $\rho_*(a)$ the automorphisms of $F_{ab}=F/[F,F]$ induced by $\theta$ and $\rho(a)$, respectively. Since the inner automorphisms $\conj_{f_a}$ act trivially on $F_{ab}$, we get
\begin{equation}\label{eq:theta-star}
\theta_*\,\rho_*(a)\,\theta_*^{-1} = \rho_*(\varphi_A(a))\,,
\end{equation}
for all $a\in A[A_4]$.

Now we are going to apply explicit formulas for the action of $A[A_4]$ on $F$, which were computed in Crisp--Paris' paper~\cite[Proposition~2.3]{CriPar1} for arbitrary $n$.

Crisp--Paris use a different notation for the standard generators of $A[D_n]$ and $A[A_{n-1}]$, which they denote $\delta_1,\dots,\delta_n$, and $\alpha_1,\dots,\alpha_{n-1}$, respectively. We use their numbering after interchanging the two vertices of the fork in the Coxeter graph of $D_5$, and identify them with our generators for $n=5$ by
\[
\delta_i=t_{6-i},\quad (1\le i\le5); \qquad s_i=\alpha_{5-i}\qquad (1\le i\le4).
\]
With this convention, the Crisp--Paris section $\alpha_i\mapsto \delta_{i+1}$ identifies $A[A_4]$ with our chosen subgroup $\langle t_1,t_2,t_3,t_4\rangle\le A[D_5]$.

By the formulas for the isomorphism $\psi\colon F\rtimes A[A_4]\to A[D_5]$ in the proof of \cite[Proposition~2.3]{CriPar1},
translated to our notation, $F$ has free basis $x_1,x_2,x_3,x_4$, where
\[
x_1=t_5t_4^{-1},\qquad
x_2=t_3\cdot x_1\cdot t_3^{-1},\qquad
x_3=t_2t_3\cdot x_1\cdot t_3^{-1}t_2^{-1},\qquad
x_4=t_1t_2t_3\cdot x_1\cdot t_3^{-1}t_2^{-1}t_1^{-1},
\]
and the action $\rho\colon A[A_4]\to\Aut(F)$ is given by
\[
\rho(s_{4-r})(x_j)=
\begin{cases}
x_{r+1}, &  j=r,\\
x_{r+1}x_r^{-1}x_{r+1}, &  j=r+1,\\
x_j, & j\notin\{r,r+1\},
\end{cases}
\ \ \text{for\ \ } 1\le r\le 3,\quad \text{and\quad }
\rho(s_4)(x_j)=
\begin{cases}
x_1, & j=1,\\
x_1^{-1}x_j, & j\ne 1.
\end{cases}
\]
If we denote by $\rho_*\colon A[A_4]\to \Aut(F_{ab})$ the induced representation on $F_{ab}$, it follows that in the basis $[x_1],\dots,[x_4]$ of $F_{ab}$, we have:
\[
\rho_*(s_4)=
\left[\begin{array}{r@{\;\;}r@{\;\;}r@{\;\;}r} 
1 & -1 & -1 & -1\\
0 & 1 & 0 & 0\\
0 & 0 & 1 & 0\\
0 & 0 & 0 & 1
\end{array}\right],
\qquad
\text{and\quad $\rho_*(s_i)$\ $(1\le i\le3)$\quad has blocks:\quad }
\left[\begin{array}{r@{\;\;}r} 
0 & -1\\
1 & 2
\end{array}\right],\,[1],\,[1].
\]
An easy computation shows that every $\rho_*(s_i)$ has characteristic polynomial $(X-1)^4$.

Since $\pi(\Delta_D)=\Delta^2$ by \cite[proof of Theorem~2.8]{CasPar1}, and since we view $A[A_4]$ as the subgroup $\langle t_1,t_2,t_3,t_4\rangle$ in the splitting $A[D_5]\simeq F\rtimes A[A_4]$, the elements $\Delta_D$ and $\Delta^2$ differ by an element of $F$. Thus conjugation by $\Delta_D$ and the action $\rho(\Delta^2)$ differ on $F$ by an inner automorphism of $F$, which is trivial on $F_{ab}$.

Now conjugation by $\Delta_D$ fixes $t_1,t_2,t_3$ and swaps $t_4,t_5$. Hence it sends each generator $x_j$ above to $x_j^{-1}$. Therefore $\rho_*(\Delta^2)$ sends each $[x_j]$ to $-[x_j]$, and so $\rho_*(\Delta^2)=-I$ on $F_{ab}$.

Recall that $\varphi_A$ has the form given in~\eqref{eq:normal-form}, $\varphi_A=\conj_g\circ\gamma_p\circ\chi_A^\varepsilon$,
which means that for all $1\le i\le4$ we have: $\varphi_A(s_i)=g\,\gamma_p(\chi_A^\varepsilon(s_i))\,g^{-1}$.
Applying $\rho_*$, we observe that the conjugation by $g$ changes the matrix only by similarity, and hence does not change its characteristic polynomial.

Assume now that $p$ is odd. 

If $\varepsilon=0$, then $\rho_*(\gamma_p(s_i)) = \rho_*(s_i\Delta^{2p}) = \rho_*(s_i)\rho_*(\Delta^2)^p = \rho_*(s_i)\,(-I)^p= -\rho_*(s_i)$.

If $\varepsilon=1$, then $\chi_A(s_i)=s_i^{-1}$, and therefore
$\gamma_p(\chi_A(s_i))=(s_i\Delta^{2p})^{-1}$. Hence
\[
\rho_*(\gamma_p(\chi_A(s_i))) = \rho_*((s_i\Delta^{2p})^{-1}) = \rho_*(s_i)^{-1}\,(-I)^{-p} = -\rho_*(s_i)^{-1}.
\]
In both cases $\rho_*(\varphi_A(s_i))$ has characteristic polynomial $(X+1)^4$.

On the other hand, applying the identity~\eqref{eq:theta-star}, $\theta_*\rho_*(a)\,\theta_*^{-1}=\rho_*(\varphi_A(a))$, to $a=s_i$, we see that $\rho_*(\varphi_A(s_i))$ is similar to $\rho_*(s_i)$. Hence it has characteristic polynomial $(X-1)^4$, a contradiction. Therefore $p$ must be even.
\end{proof}

\begin{proof}[Proof of Theorem~\ref{thm:2}]
Let $\phi\in\Aut(\ov{A[D_5]})$. Arguing as in the beginning of this section, choose a lift $\varphi\colon A[D_5]\to A[D_5]$. As above, $\varphi$ preserves $F$ and induces a non-cyclic endomorphism $\varphi_A$ of $A[A_4]$. Hence, by the classification of non-cyclic endomorphisms of $A[A_4]$, we have
\[
\varphi_A=\conj_g\circ\gamma_p\circ\chi_A^\varepsilon,
\]
where $g\in A[A_4]$, $\varepsilon\in\{0,1\}$, and $\gamma_p(s_i)=s_i\Delta^{2p}$ for all $i$. By Lemma~\ref{lem:p-even}, $p$ is even.

Let $z=\Delta_D^2$, the central generator of $A[D_5]$. By the computation in the proof of \cite[Theorem~2.8]{CasPar1}, $\pi(\Delta_D)=\Delta^2$,
hence $\pi(z)=\Delta^4$. Write $p=2q$. Let $\ell\colon A[D_5]\to\Z$, sending $t_i\mapsto 1$ for all $i$, be the exponent-sum homomorphism, and define a new homomorphism $\varphi'\colon A[D_5]\to A[D_5]$ by
\[
\varphi'(x)=\varphi(x)\,z^{m\,\ell(x)},
\]
for all $x\in A[D_5]$, where $m=-q$ if $\varepsilon=0$, and $m=q$ if $\varepsilon=1$. Since $z$ is central, $\varphi'$ is indeed a homomorphism. Also $\varphi'$ induces the same automorphism $\phi$ of $\ov{A[D_5]}$, because $z\in Z(A[D_5])$.

Let us compute the induced map $\varphi'_A$ on $A[D_5]/F\simeq A[A_4]$. For $1\le i\le4$, the generator $s_i$ is represented by $t_i$, and therefore
\[
\varphi'_A(s_i)=\pi(\varphi'(t_i))=\pi(\varphi(t_i)\,z^m)=\pi(\varphi(t_i))\,\pi(z)^m =\varphi_A(s_i)\,\Delta^{4m}.
\]
If $\varepsilon=0$, then $\varphi_A(s_i)=g s_i g^{-1}\Delta^{2p}$, so $\varphi'_A(s_i)=g s_i g^{-1}\Delta^{2p+4m}=g s_i g^{-1}$. If
$\varepsilon=1$, then $\varphi_A(s_i)=g s_i^{-1}g^{-1}\Delta^{-2p}$, so $\varphi'_A(s_i)=g s_i^{-1}g^{-1}\Delta^{-2p+4m}=g s_i^{-1}g^{-1}$.
Thus in both cases $\varphi'_A=\conj_g\circ\chi_A^\varepsilon$, and hence $\varphi'_A$ is an automorphism of $A[A_4]$.

Again, arguing as in the proof of Lemma~\ref{lem:p-even}, we see that $\varphi'$ preserves $F$, restricts to an automorphism of $F$, and induces an automorphism of $A[D_5]/F$. Thus, by the five-lemma argument for group extensions, $\varphi'$ is an automorphism of $A[D_5]$ itself.

Now, by Theorem~\ref{thm:1}, every automorphism of $A[D_5]$ belongs to $\Inn(A[D_5])\rtimes\langle\chi\rangle$. Since $\varphi'$ induces $\phi$
on $\ov{A[D_5]}$, it follows that $\phi\in \Inn(\ov{A[D_5]})\,\langle\ov\chi\rangle$, which shows that $\Aut(\ov{A[D_5]})\subseteq\Inn(\ov{A[D_5]})\,\langle\ov\chi\rangle$. The reverse inclusion is obvious, so 
\[
\Aut(\ov{A[D_5]})=\Inn(\ov{A[D_5]})\,\langle\ov\chi\rangle.
\]
By~\cite[Lemma~7.1]{CasPar1}, $\ov\chi\notin\Inn(\ov{A[D_5]})$, so this product is semidirect. Finally, the center of $\ov{A[D_5]}$ is trivial, hence $\Inn(\ov{A[D_5]})\simeq \ov{A[D_5]}$ and $\Out(\ov{A[D_5]})\simeq\langle\ov\chi\rangle\simeq C_2$. This finishes the proof of Theorem~\ref{thm:2}.
\end{proof}

\section{An application}\label{sec4}

We finish with an application to the strong twist conjecture of Jones--Mangioni--Sartori~\cite{JoMaSa1}. We recall only the terminology
needed here. An \emph{Artin system} is a pair $(A,S)$, where $A$ is a group and $S\subset A$ is an Artin generating set, i.e.\ $S$ is identified with the standard generating set of an Artin group; see~\cite[Definition~2.1]{JoMaSa1}. We denote by $\Gamma_S$ the corresponding defining graph. In this section we use the convention of~\cite{JoMaSa1}: two vertices $s,t\in S = V(\Gamma_S)$ are joined with an edge whenever the corresponding Artin exponent $m_{s,t}$ is finite, including the case where $m_{s,t}=2$, while a missing edge means $m_{s,t}=\infty$. This differs from the Coxeter graph convention used earlier in the paper, where edges labeled by $m_{s,t}=2$ are omitted.  For $X\subseteq S$, we denote by $A_X$ the subgroup of $A$ generated by $X$.

The \emph{reflection set} of $(A,S)$ is $R_S=\{gsg^{-1}\mid s\in S,\ g\in A\}$. An \emph{elementary twist} is the operation defined in~\cite[Definition~3.2]{JoMaSa1}: one cuts the defining graph along a separating spherical indecomposable subgraph and conjugates one side by the
corresponding Garside element. Two Artin generating sets are called \emph{twist equivalent} if they are related by a finite sequence of
elementary twists and conjugations. The Artin system $(A,S)$ satisfies the \emph{strong twist conjecture} if every Artin generating set $U$ for $A$ with $R_U=R_S$ is twist equivalent to $S$. A \emph{$1$-chunk} of $\Gamma_S$ is a connected induced subgraph without separating vertices, maximal with these properties; see~\cite[Definition~2.11]{JoMaSa1}.

Jones--Mangioni--Sartori proved the strong twist conjecture for spherical-type Artin systems of type $A_n$, $B_n$ for $n\ge2$, and $D_n$ with
$n\ge4$, $n\ne5$. The case $D_5$ was omitted because the automorphism group of $A[D_5]$ was not known~\cite[Lemma~5.3]{JoMaSa1}. We now fill this gap.

\begin{prop}\label{prop:D5-strong-twist}
Let $(A,S)$ be an Artin system of type $D_5$. Then $(A,S)$ satisfies the strong twist conjecture.
\end{prop}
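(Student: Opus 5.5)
Let $U$ be an Artin generating set of $A$ with $R_U=R_S$. The plan is to show that $U$ is the image of $S$ under an automorphism of $A$, and then use Theorem~\ref{thm:1} to see that this automorphism only changes $S$ by a conjugation and by inversion. Since inversion does not change the generating set up to inverses in the relevant sense, $U$ will then be twist equivalent to $S$, in fact conjugate to $S$.

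\textbf{Step 1: the type of $(A,U)$ is $D_5$.} I would follow the Jones--Mangioni--Sartori argument for $D_n$, $n\ne5$. By their general results, since $R_U=R_S$ and $\Gamma_S$ is spherical with a single $1$-chunk, the Artin system $(A,U)$ is also spherical. Moreover, the defining graph of $U$ has the same isomorphism type as $\Gamma_S$: the abelianization and the conjugacy classes of reflections match, and the rank, center and Coxeter type are recovered from $R_U=R_S$. Presumably this is exactly the part of \cite[Lemma~5.3]{JoMaSa1} that does not depend on knowing $\Aut(A)$. Hence there is a bijection $S\to U$ preserving the Coxeter matrix, and it extends to an automorphism $\Phi\in\Aut(A)$ with $\Phi(S)=U$.

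\textbf{Step 2: apply Theorem~\ref{thm:1}.} By Theorem~\ref{thm:1} we can write $\Phi=\conj_g\circ\chi^\varepsilon$. If $\varepsilon=1$, then $\Phi(t_i)=gt_i^{-1}g^{-1}$. But $t_i^{-1}\notin R_S$, because every element of $R_S$ has exponent sum $1$ under $\ell$, while $t_i^{-1}$ has exponent sum $-1$. This contradicts $U\subseteq R_U=R_S$, so $\varepsilon=0$. Therefore $U=gSg^{-1}$, which is twist equivalent to $S$ by a single conjugation.

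\textbf{Main obstacle.} The hard step is Step 1: justifying that $R_U=R_S$ forces $\Gamma_U\cong D_5$ so that an automorphism $\Phi$ exists. I expect to quote \cite{JoMaSa1} directly here, checking that their proof for $D_n$ used the automorphism classification only in the final step that Theorem~\ref{thm:1} now supplies. Note that the graph automorphism swapping $t_4$ and $t_5$ is conjugation by $\Delta_D$, so it is already covered by $\Inn(A)$ and needs no separate treatment.
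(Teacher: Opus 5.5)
Your proposal is correct and follows essentially the same route as the paper. You quote \cite[Lemma~5.3]{JoMaSa1} to get that $(A,U)$ is of type $D_5$, so $U=\psi(S)$ for some $\psi\in\Aut(A)$; you then apply Theorem~\ref{thm:1} and use the exponent-sum homomorphism $\ell$ to rule out the inversion case, leaving $U$ conjugate to $S$. Your introductory remark that inversion ``does not change the generating set up to inverses'' is unnecessary and slightly misleading, since Step~2 rightly excludes that case outright rather than absorbing it.
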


\begin{proof}
Let $U$ be an Artin generating set for $A$ such that $R_U=R_S$. As in the proof of~\cite[Lemma~5.3]{JoMaSa1}, the equality $R_U=R_S$ implies that $(A,U)$ is again of spherical type and has defining graph of type $D_5$. Hence there exists an automorphism $\psi\in\Aut(A)$ such that $\psi(S)=U$.

Choose an isomorphism from the standard Artin group $A[D_5]$ onto $A$ which sends the standard generating set to $S$. Through this identification, Theorem~\ref{thm:1} applies to $\psi$. Thus $\psi$ is either inner or the composition of an inner automorphism with the inversion automorphism $\chi_S$, where $\chi_S(s)=s^{-1}$ for all $s\in S$.

The second case is impossible. Indeed, if $\psi=\conj_h\circ\chi_S$, then $U=\{hs^{-1}h^{-1}\mid s\in S\}$. Since $R_U=R_S$, every element of $U$ is conjugate to some element of $S$. Hence, for each $s\in S$, the element $s^{-1}$ is conjugate to some $t\in S$. This contradicts the exponent-sum homomorphism $\ell\colon A\to\Z$, defined by $\ell(s)=1$ for all $s\in S$, because $\ell$ is constant on conjugacy classes, whereas $\ell(s^{-1})=-1$ and $\ell(t)=1$. Therefore $\psi$ is inner. Hence $U$ is conjugate to $S$, and in particular $U$ is twist equivalent to~$S$.
\end{proof}

\begin{corl}\label{cor:JMS-D5}
The conclusion of \textup{\cite[Corollary~E]{JoMaSa1}} remains true with type $D_5$ included. More precisely, let $(A,S)$ be an Artin system whose defining graph $\Gamma_S$ is connected. Suppose that, for every $X\subseteq S$ spanning a $1$-chunk in $\Gamma_S$, the Artin system $(A_X,X)$ is one of the types listed in \textup{\cite[Corollary~E]{JoMaSa1}}, with item $(5)$ replaced by spherical type $A_n$ for $n\ge2$, $B_n$ for $n\ge2$, or $D_n$ for $n\ge4$. Then $(A,S)$ satisfies the strong twist conjecture.
\end{corl}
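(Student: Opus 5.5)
The plan is to deduce Corollary~\ref{cor:JMS-D5} directly from the proof of \cite[Corollary~E]{JoMaSa1}, with Proposition~\ref{prop:D5-strong-twist} supplying the one missing input. I will not redo their argument. Instead I will check that their proof is modular: the conclusion for $(A,S)$ follows once the strong twist conjecture holds for each $1$-chunk $(A_X,X)$, possibly together with some structural hypotheses on the chunk types. The only place where the case $D_5$ was excluded is item $(5)$. That exclusion came from \cite[Lemma~5.3]{JoMaSa1}, which establishes the strong twist conjecture for the spherical types $A_n$, $B_n$, and $D_n$ with $n\ne5$.

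The key steps are as follows. First, I will recall the general combination theorem of \cite{JoMaSa1} that underlies Corollary~E. It states that if $\Gamma_S$ is connected and every $1$-chunk Artin system satisfies the strong twist conjecture (and lies in the class of systems for which their reflection-set rigidity results apply), then $(A,S)$ satisfies the strong twist conjecture. Second, I will verify that, among the chunk types in item $(5)$, the hypothesis ``satisfies the strong twist conjecture'' is used only through \cite[Lemma~5.3]{JoMaSa1}. Third, for a chunk of type $D_5$, I will replace that lemma by Proposition~\ref{prop:D5-strong-twist}.

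I also need to check any auxiliary properties used for the other spherical chunks, such as being of spherical type or the description of the Garside element. For $D_5$ these are standard and already recalled in Section~\ref{sec2}, and none of them depends on $n\ne5$.

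The main obstacle is the bookkeeping in the second step. I must make sure that no other part of the proof of Corollary~E relied on $n\ne5$, for instance through the knowledge of $\Aut(A[D_n])$ used to control how automorphisms act on chunks. Where the automorphism group is invoked, Theorem~\ref{thm:1} supplies the same statement for $n=5$: $\Out(A[D_5])\simeq C_2$ is generated by inversion, and inversion is excluded by the exponent-sum argument. So the argument goes through verbatim.
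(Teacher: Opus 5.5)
Your proposal is correct and matches the paper's argument: the proof of \cite[Corollary~E]{JoMaSa1} goes through verbatim. Proposition~\ref{prop:D5-strong-twist} supplies the strong twist conjecture for $D_5$ chunks, which is the one case \cite[Lemma~5.3]{JoMaSa1} left out. Your extra check that $n\ne5$ is not used anywhere else is a reasonable sanity check; the paper simply asserts that this was the only missing input.
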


\begin{proof}
The proof is the same as the proof of \cite[Corollary~E]{JoMaSa1}. The only missing input for including type $D_5$ in item $(5)$ was the strong
twist conjecture for Artin systems of type $D_5$, supplied by Proposition~\ref{prop:D5-strong-twist}.
\end{proof}



\end{document}